\newtheorem{theorem}{Theorem}
\theoremstyle{plain}
\newtheorem{lemma}{Lemma}
\theoremstyle{remark}
\newtheorem{remark}{Remark}
\newcommand{\C}{\ensuremath{\mathbb{C}}}
\newcommand{\Z}{\ensuremath{\mathbb{Z}}}
\newcommand{\N}{\ensuremath{\mathbb{N}}}
\newcommand{\R}{\ensuremath{\mathbb{R}}}
\newcommand{\A}{\ensuremath{\mathcal{A}}}
\newcommand{\B}{\ensuremath{\mathcal{B}}}
\newcommand{\E}{\ensuremath{\mathcal{E}}}
\newcommand{\T}{\ensuremath{\mathbb{T}}}
\newcommand{\Sc}{\ensuremath{\mathcal{S}}}
\newcommand{\Hi}{\ensuremath{\mathcal{H}}}
\newcommand{\At}{\A (\T_\theta^n)}
\newcommand{\vmu}{\boldsymbol{\mu}}
\newcommand{\vx}{\mathbf{x}}
\newcommand{\vy}{\mathbf{y}}
\newcommand{\vtau}{\boldsymbol{\tau}}
\newcommand{\vdelta}{\boldsymbol{\delta}}
\newcommand{\cd}{\cdot}
\newcommand{\SkewM}{\ensuremath{\text{Skew}_n (\mathbb{Z})}}
\newcommand{\pmJJ}{\epsilon_J}
\newcommand{\Too}{\mathbf{T_{11}}}
\newcommand{\Ttt}{\mathbf{T_{32}}}
\def\url@leostyle{%
  \@ifundefined{selectfont}{\def\UrlFont{\sf}}{\def\UrlFont{\small\ttfamily}}}
\def\XXint#1#2#3{{\setbox0=\hbox{$#1{#2#3}{\int}$}
     \vcenter{\hbox{$#2#3$}}\kern-.5\wd0}}
\let\l\left
\let\r\right
\title{Morita ``equivalences'' of equivariant torus spectral triples}
\author{Jan Jitse Venselaar}
\date{28/12/2011}
\address{Mathematical Institute\\
         Utrecht University\\
	 PO Box 80010, 3508 TA Utrecht\\
	 The Netherlands}
\email{J.J.Venselaar1@uu.nl}
 \keywords{spectral geometry, noncommutative geometry, equivariant spectral triples, Morita equivalences of spectral triples}
\subjclass[2010]{58B34; 46L87}
\begin{document}

\begin{abstract}
In general, Morita equivalence of spectral triples need not be a symmetric relation.
In this paper, we show that Morita equivalence of spectral triples is an equivalence relation for equivariant torus spectral triples.
\end{abstract}

\maketitle

\section{Introduction}
Two algebras are Morita equivalent if their categories of representations are equivalent. If the algebras are commutative, this implies that they are isomorphic, but for noncommutative algebras, the notion is more general.

By the classical Gelfand-Naimark duality, Morita equivalences of $C^*$-algebras can be thought of as homeomorphisms of the corresponding Hausdorff spaces. The geometry of a manifold can be encoded by a so-called spectral triple $(\A,\Hi,D)$, through the interaction via a Hilbert space $\Hi$ of a commutative $C^*$-algebra $\A$, characterizing the topology, and the Dirac operator $D$, characterizing the metric. The spin geometry of a manifold, a refinement of Riemannian geometry, is encoded through adding to this data an antilinear isometry $J$ of the Hilbert space, called ``reality operator''. The conditions for a spectral triple can be relaxed to include noncommutative $C^*$-algebras, leading to noncommutative geometry. 

In order to have an isomorphism, or isometry, of spectral triples, we thus require the algebras to be Morita equivalent, and modify the Hilbert space and Dirac operator in such a way that all geometric data is preserved~\cite{connes_gravity_1996}. If the algebra is noncommutative, this gives rise to interesting new possibilities for ``diffeomorphisms'' which do not occur if the algebra is commutative.

For example, given a spectral triple where the algebra is the algebra of smooth functions on a four-manifold times a certain finite dimensional matrix algebra, the Morita self-equivalences of the algebra change the Dirac operator. These changes take the form of connections, which for suitably chosen matrix algebras constitute the gauge group of the Standard Model of particle physics~\cite{chamseddine_noncommutative_2010}.

However, a Morita equivalence of spectral triples (see Section~\ref{sec:morita equivalences}) is not a true equivalence relation, as it is not symmetric in general, see for example~\cite{MR2371808}*{Remark 1.143}. In this paper, using the results of~\cite{rieffel_morita_1999} and~\cite{elliott_morita_2007}*{Theorem 1.1} on Morita equivalences of the algebra of the noncommutative torus, we show what if we restrict to equivariant spectral triples for the noncommutative $n$-torus (as classified by the author in~\cite{venselaar_2010}) Morita equivalences are symmetric. This appears to be the first non-trivial example where symmetry can be proven. The Morita equivalences also lead to new isometries between real spectral triples which are not present in the commutative case, even though all such spectral triples were shown to be isospectral deformations of commutative tori.

\section{Torus equivariant real spectral triples}\label{sec:equivariant spectral triples}
All possible smooth real spectral triples which are equivariant with respect to an $n$-torus action were classified by the author in~\cite{venselaar_2010}. We will give a short recap of the relevant results and notation. 
A smooth spectral triple is given by the data $(\A,\Hi,D)$, with $\A$ a pre-$C^*$ algebra, here assumed to be unital and separable, $\Hi$ a separable Hilbert space on which $\A$ has a representation $\pi$, acting as bounded operators. The operator $D$, usually called Dirac operator, is an unbounded operator on $\Hi$. This triple has to satisfy certain conditions, clarified for example in~\cite{vrilly_introduction_2006}. A real spectral triple has an extra operator $J$, an antilinear isometry of $\Hi$ with itself.

An equivariant spectral triple is a spectral triple for which a Hopf algebra acts on all of the data of spectral triple in a compatible way, as described in~\cite{sitarz_equivariant_2003}.

The algebra of an equivariant real spectral triple on the noncommutative $n$-torus is generated by unitary elements $U_{\vx}$ with $\vx\in\Z^n$ such that
\[ U_{\vx} U_{\vy} = e(\vx\cd\theta\vy) U_{\vy} U_{\vx},\]
with $\theta$ a real skew-symmetric $n\times n$ matrix. 
As shorthand we write $e(\cd) = e^{2\pi i \cd}$, and $\vx\cd\vy$ for the standard inner product for vectors in $\R^n$.

The Hopf algebra for which the spectral triple of the noncommutative torus is equivariant, is the algebra $U(\mathfrak{t}^n)$, the universal enveloping algebra of the Lie algebra of the $n$-torus. The Lie algebra is generated by $n$ generators $\delta_i$ such that $\delta_i \delta_j = \delta_j \delta_i$. The algebra has a representation $\rho$ on the Hilbert space, a representation $\phi$ on the $C^*$-algebra such that $\phi(\delta_i) U_{\vx} = \vx_i U_{\vx}$, and the representations satisfy:
\begin{subequations}
 \begin{align}
 \rho(h)\pi(a) v &= \l(\pi( \phi(h) a ) + \pi(a) \rho(h)\r) v\label{eqn:equivariance algebra},\\
 \rho(h) D v &=  D \rho(h) v\label{eqn:equivariance dirac operator},\\
 \end{align}
\end{subequations}
for $v\in \Hi$, $h\in U(\mathfrak{t}^n)$ and $a\in\A$. The real structure $J$ anticommutes with the Hopf algebra action.

Any set of mutual eigenvectors $e_{\vmu}$ of the generators $\delta_i$, with $\delta_i e_{\vmu} = \vmu_i e_{\vmu}$, $\vmu\in \R^n$, form the basis of a Hilbert space on which $U(\mathfrak{t}^n)$ acts in accordance with the above conditions. 
If the set of eigenvectors $e_{\vmu}$ is of the form ${\{e_{\vmu_0 + \vx}\}}_{\vx\in\Z^n}$, the algebra $\A$ acts naturally on this space.

In order for the real spectral triple to satisfy all conditions as stated for example in~\cite{connes_noncommutative_1995}, we restrict the vectors $\vmu$ to lie either in the lattice $\Z^n$, or $\Z^n$ shifted by $1/2$ in one or more directions.
Furthermore we take $2^{\lfloor n/2\rfloor}$ copies of the Hilbert space. The algebra $\A$ acts diagonally on this space. The Dirac operator is given by
\begin{equation}D = \sum_i (\vtau_i \cd\vdelta) A_i +B,\label{eqn:equivariant Dirac operator}\end{equation} with ${\{\vtau_i\}}_{i=1}^n$ linearly independent vectors spanning $\R^n$, $B$ a bounded self-adjoint operator commuting with the algebra, and ${\{A_i\}}_{i=1}^n$ a set of $2^{\lfloor n/2\rfloor} \times 2^{\lfloor n/2\rfloor}$ self-adjoint matrices which generate an irreducible representation of the Clifford algebra $Cl_{n,0}$ on $\C^{\lfloor n/2\rfloor}$. The real structure $J$ is then uniquely determined, up to multiplication with a complex number of norm $1$.

\section{Morita equivalences of spectral triples}\label{sec:morita equivalences}
\subsection{Definitions: general case}
Two pre-$C^*$ algebras $\A$ and $\A'$ are strongly Morita equivalent if there exists a Hilbert $C^*$-bimodule $\leftidx{_{\A'}}{\E}_\A$ for which we have $\A'=\text{End}_{\A} (\E)$~\cite{MR0353003}. If $\A$ and $\A'$ are strongly Morita equivalent, we say that the spectral triple $(\A,\Hi,D)$ is Morita equivalent to $(\A',\Hi',D')$, if $\Hi' = \E\otimes_{\A} \Hi$ and the Dirac operators $D$ and $D'$ are related by a connection.
The connection is a way to transport the action of $D$ from $\Hi$ to $\Hi'= \E\otimes_{\A} \Hi$. This is needed, since the action of $D$ does not in general commute with $\A$.

The \emph{connection} $\nabla_D$ on $\E$ is a linear operator on $\E$ such that
\begin{equation} D' (s \otimes\xi) = \nabla(s) \xi+ s\otimes D\xi,\label{eqn:new Dirac operator}\end{equation}
has all the right properties of a Dirac operator on $\Hi'$. Consistency of this definition with the action of $\A$ on $\E$ and $\Hi$ give that $\nabla_D$ must satisfy a \emph{Leibniz rule}:
\begin{equation}
 \nabla_D (s a) = \nabla(s) a + s \otimes [D,a],\label{eqn:Leibniz rule}
\end{equation}
for all $a\in \A$ and $s \in E$. In order to have consistency with the commutative case, where connections are given by one-forms acting by multiplication on the space of spinors, we have that $\nabla_D$ must take values in $\E \otimes \Omega_D^1$, where $\Omega_D^1$ is the space of bounded operators on $\Hi$ given by:
\begin{equation}
 \Omega_D^1 = \text{span}\{a[D,b] \mid a,b\in\A\}.\label{eqn:space of oneforms}
\end{equation}
By construction $\Omega_D^1$ is a bimodule over $\A$. The condition that $D'$ is a self-adjoint operator on $\Hi'$ gives an extra condition on $\nabla_D$, it must be \emph{Hermitian}:
\begin{equation}(r \vert\nabla_D s) - (\nabla_D r\vert s) = [D,(r\vert s)],\label{eqn:Hermitian condition} \end{equation}
for all $r,s\in \E$, where $(r\vert s)$ is the inner product of the Hilbert $C^*$-modules $\E$, taking values in $\A$.

From \eqref{eqn:new Dirac operator}, the Dirac operator $D'$ can be determined, up to a component commuting with the algebra $\A'$, as follows:
\begin{equation} [D',b] (s\otimes\psi) = [\nabla_D,b]s \otimes\psi+ bs \otimes D\psi - bs \otimes D\psi = [\nabla_D,b]s \otimes \psi.\label{eqn:new Dirac commutator}\end{equation}
We will denote a Morita equivalence of $(\A,\Hi,D)$ to $(\A',\Hi',D')$ by an equivalence bimodule $\E$ and connection $\nabla_D$ by $(\E,\nabla_D)$.

\subsection{Definition: real spectral triple} 
If we are considering a spectral triple with a real structure $J$, the Hilbert space $\Hi$ is an $\A$-bimodule, with the opposite algebra $\A^o = J \A J^{-1}$ acting from the right.
The Hilbert space $\Hi'$ can then be converted to a $\A'$-bimodule by setting $\Hi' = \E \otimes_{\A} H \otimes_{\A} \bar{\E}$ where $\bar{\E}$ is the opposite module $\leftidx{_\A}{\bar{\E}}_{\A'}$, and 
\begin{equation}D'(s\otimes \psi \otimes t) =  \nabla(s) \psi\otimes t + s \otimes D\psi \otimes t + s \otimes \psi (\overline{\nabla t}).\label{eqn:new Dirac operator real}\end{equation}
The real structure $J'$ is then given in the obvious way
\[J' (s\otimes \xi \otimes \bar{t}) = t \otimes J \xi \otimes \bar{s}.\]
We see that \eqref{eqn:new Dirac commutator} still can be used to calculate the new Dirac operator in this case.

\subsection{Examples}
If $\A$ is a noncommutative algebra, non-trivial examples are the Morita self-equivalences of $\A$ with itself. Take as equivalence bimodule the algebra $\A$ itself. The algebra and Hilbert space are unchanged, but the Dirac operator $D$ changes as~\cite{connes_gravity_1996}:
\begin{equation} D' = D + \mathbb{A} + \pmJJ J \mathbb{A} J^\dagger,\label{eqn:self-equivalence}\end{equation}
where $\mathbb{A}$ is given by a finite sum $\sum_j a_j [D,b_j]$, $a_j,b_j \in \A$. The sign $\pmJJ$ in the equation is $+1$ if the dimension of the spectral triple is $n\not\equiv 1 \text{ mod }4$ and $-1$ is $n\equiv 1\text{ mod }4$, where the dimension is a number describing the growth of the spectrum of the Dirac operator, see for example~\cite{connes_noncommutative_1995}. In the case of the noncommutative $n$-torus, the dimension is $n$.

The element $\mathbb{A}$ of $\Omega_D^1$ is called a gauge potential, and is an important part of the noncommutative geometry approach of the Standard Model of particle physics~\cite{chamseddine_noncommutative_2010}.

As a useful and interesting example, we compute a Morita equivalence of a commutative spectral triple $( C^\infty(S^1), L^2(S^1), i\dfrac{\partial}{\partial t})$, with the function algebra $C^\infty(S^1)$ generated by appropriately summable combinations of unitary generators $e^{2\pi i k t}$, $k\in \Z$. 

We take the equivalence bimodule to be $C^\infty(S^1)$. We have $\Omega_D^1 \simeq C^\infty(S^1)$, and so the connection $\nabla_D$ is a linear map from $C^\infty(S^1) \rightarrow C^\infty(S^1)$. The Leibniz rule \eqref{eqn:Leibniz rule} in this case reduces to $\nabla_D(ab) = \nabla(a)b + a [D,b]$, hence $\nabla_D(a)= [D,a] + c a$ with $c\in C^\infty(S^1)$. The Hermitian condition \eqref{eqn:Hermitian condition} then ensures $c = c^*$.
The new Dirac operator is then the same up to additive term commuting with the algebra, hence $D' = D + c$ with $c = c^* \in C^\infty(S^1)$. In the real case, since $J J^\dagger = 1$ and $JD = -DJ$ in dimension $1$, we see that 
\[J cJ^\dagger = J a[D,b] J^{\dagger} = a^* [JDJ^{\dagger},J bJ^{\dagger}] = -a^*[D,b^*] = -c^*,\]
hence $D' = D + c -c^* = D$. This is in fact true for all Morita equivalences of commutative real spectral triples when the Hilbert space is given as usual by $L^2$ sections of spinors, see the discussion in~\cite{MR1607870}*{Section 3.4}. If the Hilbert space is taken to be more general, commutative examples where $D'\neq D$ can be constructed, see~\cite{dongen_electrodynamics_2011}.

\subsection{Equivalence relation}
Morita equivalence is an equivalence relation for $C^*$-algebras, but what about Morita equivalences of spectral triples?
It is obviously a reflexive relation. Also not so hard to see is transitivity. For completeness we observe:

\begin{lemma}\label{lem:morita transitive}
 Morita equivalence of spectral triples is a transitive relation.
\end{lemma}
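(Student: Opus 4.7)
Given Morita equivalences $(\E,\nabla_D)$ from $(\A,\Hi,D)$ to $(\A',\Hi',D')$ and $(\E',\nabla_{D'})$ from $(\A',\Hi',D')$ to $(\A'',\Hi'',D'')$, the plan is to exhibit a composite Morita equivalence $(\E'',\nabla_D'')$ from $(\A,\Hi,D)$ to $(\A'',\Hi'',D'')$. The natural candidate bimodule is $\E'':=\E'\otimes_{\A'}\E$, a Hilbert $C^*$-bimodule with left $\A''$-action and right $\A$-action. Transitivity of strong Morita equivalence at the $C^*$-algebra level gives $\A''=\text{End}_\A(\E'')$, and associativity of the balanced tensor product gives
\[\Hi''=\E'\otimes_{\A'}\Hi'=\E'\otimes_{\A'}(\E\otimes_\A\Hi)\simeq(\E'\otimes_{\A'}\E)\otimes_\A\Hi=\E''\otimes_\A\Hi,\]
so the underlying bimodule data for a Morita equivalence are in place.

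Next I would define the composite connection on $\E''$ by the expected formula
\[\nabla_D''(s'\otimes s):=s'\otimes\nabla_D(s)+\nabla_{D'}(s')\cdot s.\]
The second term is made precise as follows: write $\nabla_{D'}(s')=\sum_i t'_i\otimes\omega'_i$ with $\omega'_i\in\Omega_{D'}^1\subset\B(\Hi')$, and expand each $\omega'_i$ as a finite sum of terms $a'[D',b']$. Applying \eqref{eqn:new Dirac commutator} on $\Hi'=\E\otimes_\A\Hi$ replaces each $[D',b']$ by $[\nabla_D,b']$, which produces an element of $\E\otimes\Omega_D^1$ when applied to $s\in\E$; after tensoring on the left by $t'_i$ one obtains an element of $\E''\otimes\Omega_D^1$, as required.

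The Leibniz rule \eqref{eqn:Leibniz rule} for $\nabla_D''$ with respect to $a\in\A$ then follows immediately from the Leibniz rule for $\nabla_D$ applied in the first term, the second being $\A$-linear in $s$. The Hermitian condition \eqref{eqn:Hermitian condition} is verified by expanding the $\A$-valued inner product on $\E''$ in the usual way, $(r'\otimes r\vert s'\otimes s)_\A=(r\vert(r'\vert s')_{\A'}s)_\A$, and combining the Hermitian conditions for $\nabla_D$ and $\nabla_{D'}$. Once these checks are in place, the Dirac operator constructed on $\E''\otimes_\A\Hi$ from $(\E'',\nabla_D'')$ via \eqref{eqn:new Dirac operator} coincides, up to the component commuting with $\A''$ already tolerated by \eqref{eqn:new Dirac commutator}, with the given $D''$.

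The main obstacle is bookkeeping rather than any deep estimate: one has to check that the proposed $\nabla_D''$ descends to the balanced tensor product $\E'\otimes_{\A'}\E$ instead of just the algebraic one, and that the lifts of elements of $\Omega_{D'}^1$ to operators on $\E\otimes_\A\Hi$ genuinely land in $\E\otimes\Omega_D^1$ when applied to $s\in\E$. Both hinge on the Leibniz rule for $\nabla_D$ used together with the identification $\A'\simeq\text{End}_\A(\E)$ and equation \eqref{eqn:new Dirac commutator}. The Hermitian computation likewise requires a careful interleaving of the two inner-product structures, but it is otherwise a direct consequence of the respective Hermitian conditions for $\nabla_D$ and $\nabla_{D'}$.
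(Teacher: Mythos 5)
Your construction is exactly the paper's: the composite bimodule $\E'\otimes_{\A'}\E$, the connection $\nabla_{D'}\otimes\mathrm{Id}+\mathrm{Id}\otimes\nabla_D$, and the key observation that via \eqref{eqn:new Dirac commutator} the identity $[D',b']=[\nabla_D,b']$ lets the $\Omega_{D'}^1$-valued part be reinterpreted inside $\E''\otimes\Omega_D^1$, with the Leibniz and Hermitian conditions then inherited from the two given connections. This matches the paper's proof in both strategy and detail, so the proposal is correct and essentially the same argument.
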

\begin{proof}
 Let $\A$, $\B$ and $\mathcal{C}$ be pre-$C^*$ algebras, such that $\A$ is strongly Morita equivalent to $\B$ via the bimodule $\leftidx{_{\B}}{\E}_{\A}$ and $\B$ strongly Morita equivalent to $\mathcal{C}$ via the bimodule $\leftidx{_{\mathcal{C}}}{\mathcal{F}}_{\mathcal{B}}$, one can form the bimodule $\leftidx{_{\mathcal{C}}}{\mathcal{G}}_{\A} = \mathcal{F}\otimes_{\B} \mathcal{E}$, and it is well known this is a Morita equivalence bimodule between $\A$ and $\mathcal{C}$.

Now given a Morita equivalence of spectral triples from $(\A,\Hi,D)$ to $(\B,\Hi',D')$, and a Morita equivalence of spectral triples from $(\B,\Hi',D')$ to $(\mathcal{C},\Hi'',D'')$, we construct a Morita equivalence from $(\A,\Hi,D)$ to $(\mathcal{C},\Hi'',D'')$. Clearly $\Hi'' = \mathcal{F} \otimes \Hi' = \mathcal{F} \otimes \mathcal{E} \otimes \Hi = \mathcal{G} \otimes \Hi$.

Denote the respective connections of the Morita equivalences by $\nabla_D$ and $\nabla_{D'}$.
We can then define an $\Omega_D^1$-connection $\nabla_{D''}$ on $\mathcal{G}$ by 
\begin{equation}\nabla_{D''} := \nabla_{D'} \otimes \text{Id}_{\E} + \text{Id}_{\mathcal{F}} \otimes \nabla_D.\label{eqn:sum connection}\end{equation}
To see that this is connection, notice that it satisfies the Leibniz rule for the $\A$-action, since the action of $\nabla_{D'}$ on $\mathcal{F}$ trivially commutes with the action of the algebra $\A$ on $\mathcal{E}$. 

We still need to show that this is indeed a map $\mathcal{G} \rightarrow \mathcal{G} \otimes \Omega_D^1$. 
For the $\text{Id}_{\mathcal{F}} \otimes \nabla_D$ part this is clear.

We have that $\nabla_{D'} \otimes \text{Id}_{\E}$ part is a priori a map $\mathcal{F} \otimes \E \rightarrow \mathcal{F} \otimes \Omega_{D'}^1 \otimes \E$, with $\Omega_{D'}^1$ the $\A'$ bimodule spanned by $\{a[D',b] | a,b\in \A'\}$. But since $[D',b] = [\nabla_{D},b]$ acting from the left on $\E$, and $\nabla_{D}$ is a map from $\E$ to $\E \otimes \Omega_D^1$, we can view this as an element of $\mathcal{F}\otimes \E \otimes \Omega_D^1$, hence of $\mathcal{G}\otimes \Omega_D^1$.

This connection is such that $[\nabla_{D''},c] = [D'',c]$ for all $c\in \mathcal{C}$, since $[D'',c] = [\nabla_{D'},c]$, and the second part of \eqref{eqn:sum connection} commutes with the $\mathcal{C}$-action.
Also, $[\nabla_{D''},a] = [D,a]$ for all $a\in\A$, since the first part of \eqref{eqn:sum connection} commutes with the $\A$-action, hence it is a well-defined $\Omega_D^1$-connection if $\nabla_D$ is. 
As a linear combination of two Hermitian connections, it is a Hermitian connection.
\end{proof}
\begin{remark}
 This construction is also compatible with a real structure, since we can just write out \eqref{eqn:new Dirac operator real} for both connections, and check that they are the same. Also, the real structure $J''$ can easily be described in terms of the real structure $J$, since
\[ J''(u\otimes s\otimes \xi \otimes \bar{t} \otimes \bar{v}) = v\otimes J'(s\otimes \xi \otimes \bar{t}) \otimes \bar{u} = v\otimes \otimes t J \xi \otimes \bar{s} \otimes \bar{u},\]
with $s,t \in \E$, $u,v\in \mathcal{F}$ and $\xi\in \Hi$. 
\end{remark}

An open question is when Morita equivalences of spectral triples are proper equivalences, that is not only reflexive and transitive, but also symmetric. It is well known this does not hold in all cases. For example, if $(\A,\C^k,D,J)$ is a real spectral triple with a finite dimensional algebra $\A$, by the results of~\cite{krajewski_classification_1998} the Dirac operator for a real spectral triple can be written as
\[D = \Delta + J \Delta J^{-1},\]
with $\Delta$ a $k\times k$ self-adjoint matrix in $\Omega^1_D$. Together with \eqref{eqn:self-equivalence}, this implies that there is an Morita self-equivalence with the spectral triple $(\A,\Hi,0,J)$.
Since $\Omega_0^1 = \{ 0\}$, there is no connection such that there is Morita equivalence of spectral triples from $(\A,\C^k,0,J)$ to $(\A,\C^k,D,J)$, hence the relation cannot be symmetric. With the same arguments, if a spectral triple can be written as a direct sum of other spectral triples, and one of these spectral triples is finite, then the relation cannot be symmetric. In the following section, we will present the first case where the Morita ``equivalence'' is provably an equivalence relation.

The proof of symmetry as in~\cite{zhang_projective_2010}*{Theorem 5.6} allows for a more general type of connections, taking values in $\E\otimes B(\Hi)$, not just $\E \otimes \Omega_D^1$, still satisfying the Leibniz rule \eqref{eqn:Leibniz rule} and Hermitian condition \eqref{eqn:Hermitian condition}. This implies in particular that in the case of a Morita self-equivalence of a spectral triple with a finite dimensional algebra $\A$, a connection can take the form of any bounded self-adjoint operator $B$.

\section{Morita equivalences of noncommutative tori}\label{sec:morita equivalences of tori}
From~\cite{li_strong_2004}*{Theorem 1.1} we know that two algebra $\A_{\theta_1}$ and $\A_{\theta_2}$ of smooth noncommutative $n$-tori are strongly Morita equivalent if and only if their matrices $\theta_1$ and $\theta_2$ lie in the same $SO(n,n| \Z)$ orbit. This is the group of $2n \times 2n$ matrices which leave the quadratic form $\sum_{i} x_i x_{n+i}$ invariant, have determinant $1$ and have integer entries. We can write elements of $SO(n,n|\Z)$ in the following block form:
\begin{equation}\begin{pmatrix} A & B\\ C & D\end{pmatrix}\label{eqn:sonn decomposition},\end{equation} with $A,B,C,D$ $n\times n$ matrices with integer entries. 

We can identify several subgroups of this group. For the group $GL(n,\Z)$, if we take any $R \in GL(n,\Z)$ the matrix given by \begin{equation}\rho(R) = \begin{pmatrix} R & 0 \\ 0 & {\l(R^t\r)}^{-1}\end{pmatrix},\label{eqn:gln action}\end{equation} is in $SO(n,n|\Z)$. Also, for the additive group $\SkewM$ of skew-symmetric $n\times n$ matrices with integer entries, for any such matrix $N$ the matrix given by
\begin{equation}\nu(N) = \begin{pmatrix} \text{Id}_n & N\\ 0 & \text{Id}_n\end{pmatrix}\label{eqn:asn action}\end{equation}
is in $SO(n,n|\Z)$. Finally we have element $\sigma_2$, that acts on ${(x_i)}_{i=1}^{2n}$ by interchanging $x_1$ with $x_{n+1}$, $x_2$ with $x_{n+2}$ and leaving the rest invariant. 
The following is known:

\begin{lemma}[\cite{rieffel_morita_1999}]
The group $SO(n,n|\Z)$ is generated by the action of the elements $\rho(R), R\in GL(n,\Z)$ and $\nu(N), N\in \SkewM$ defined above, and the single element $\sigma_2$.\label{lem:generators of sonn}
\end{lemma}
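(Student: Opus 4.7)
The plan is to reduce an arbitrary matrix $M = \begin{pmatrix} A & B \\ C & D \end{pmatrix} \in SO(n,n|\Z)$ to a matrix with $C = 0$ by left and right multiplications with elements of the subgroup $H$ generated by the three families in the statement. Once this is achieved, the defining orthogonality relations $A^t D + C^t B = I_n$, $A^t C + C^t A = 0$, and $B^t D + D^t B = 0$ collapse to $A^t D = I_n$ and $B^t D + D^t B = 0$. Since all entries are integer, the first forces $A \in GL(n,\Z)$ with $D = (A^t)^{-1}$, and the second forces $N := A^{-1} B \in \SkewM$; the reduced matrix is then $\rho(A) \nu(N) \in H$, so the original $M$ lies in $H$.

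The core task is therefore to drive $C$ to zero using the three allowed operation types. Left and right multiplication by $\rho(R)$ performs independent $GL(n,\Z)$ row and column operations on $C$, with parallel automatic effects on $A, B, D$, so $C$ can be brought to Smith normal form $\mathrm{diag}(c_1, \ldots, c_r, 0, \ldots, 0)$ with $c_1 \mid \cdots \mid c_r$. Left multiplication by $\nu(N)$ replaces $A$ by $A + N C$, cleaning up the $A$-block without disturbing $C$ or $D$. Finally, conjugates of $\sigma_2$ by $\rho(R)$ produce rank-$2$ swaps that exchange any chosen pair of rows between the upper blocks $(A, B)$ and the lower blocks $(C, D)$, moving nonzero entries of $C$ into the $A$-position. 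I would iterate these three operations in a loop, strictly decreasing some complexity measure of $C$ (for example the pair consisting of the rank of $C$ and the sum of absolute values of its entries, ordered lexicographically), until $C = 0$.

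The main obstacle is verifying that each pass of the loop strictly decreases the chosen complexity measure. The orthogonality relations couple $A$ and $C$ in a symplectic-like fashion, so after a $\sigma_2$-swap one must carefully check, using only further $\rho$ and $\nu$ operations, that the new $C$-block is genuinely smaller than the previous one; this is where the divisibility structure in the Smith form must be exploited to ensure that the entry swapped out of $C$ has been made small enough to allow further reduction. A subtler but related issue is the role of $\sigma_2$ specifically, rather than a single-pair swap: the latter has determinant $-1$ and so lies in $O(n,n|\Z) \setminus SO(n,n|\Z)$, forcing us to use only rank-$2$ swaps, and one has to confirm that in the presence of $GL(n,\Z)$ and $\SkewM$ these are expressive enough to realize every swap the reduction requires, which is where the assumption $n \geq 2$ enters.
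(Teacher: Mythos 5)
The paper itself offers no proof of this lemma: it is imported verbatim from Rieffel--Schwarz, so there is no in-paper argument to compare against, and your attempt has to stand on its own as a proof of the generation statement. Its endgame is correct and complete: if $C=0$, the relations $A^tD=I_n$ and $B^tD+D^tB=0$ force $A\in GL(n,\Z)$, $D=(A^t)^{-1}$ and $A^{-1}B\in\SkewM$, so the matrix equals $\rho(A)\nu(A^{-1}B)$. Your inventory of moves is also right: two-sided $\rho$'s act on the lower-left block by $C\mapsto SCR$ with $S,R\in GL(n,\Z)$, a left $\nu(N)$ gives $A\mapsto A+NC$ without touching $C$ or $D$, and conjugates of $\sigma_2$ by permutation matrices in $\rho(GL(n,\Z))$ swap any chosen pair of rows between the $(A,B)$ and $(C,D)$ blocks while staying in $SO(n,n|\Z)$.

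The gap is that the heart of the theorem is exactly the step you defer. You propose a descent on the lexicographic measure $(\mathrm{rank}\,C,\sum_{ij}|C_{ij}|)$ but explicitly leave its strict decrease as ``the main obstacle,'' so what you have is a plan, not a proof. And the descent is genuinely delicate: $N$ must be skew-symmetric (so, for instance, you cannot reduce $A_{ii}$ modulo $c_i$ using $N_{ii}$), the move $A\mapsto A+NC$ only changes $A$ by integer multiples of the invariant factors $c_j$ of the Smith form, and after a $\sigma_2$-swap the rows of $A$ imported into $C$ need not be small, so monotonicity of your measure is not at all evident. The coupling relation $A^tC+C^tA=0$, which in Smith form reads $c_iA_{ij}+c_jA_{ji}=0$ (forcing $A_{ii}=0$ whenever $c_i\neq 0$), gives some control, but nothing in the proposal converts it into a terminating induction. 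The known proofs of this statement run instead through Eichler-type transvection arguments for the hyperbolic form of Witt index $n\ge 2$ over $\Z$: one shows the subgroup generated by the elementary elements acts transitively on primitive isotropic vectors, splits off a hyperbolic plane, and inducts on $n$. An argument of that strength (or a verified replacement for it) must be supplied before the reduction loop can be claimed to reach $C=0$.
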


The action of $SO(n,n|\Z)$ on an $n\times n$ skew-symmetric matrix $\theta$ is given by
\begin{equation} g \theta := (A \theta + B){(C \theta + D)}^{-1},\label{eqn:sonn action}\end{equation}
where $A,B,C$ and $D$ are the matrix components defined in \eqref{eqn:sonn decomposition}.
This action is only defined for the subset of the skew-symmetric matrices $\theta$ for which $C \theta + D$ is invertible for all possible $C,D$ such that $\begin{pmatrix} A & B\\ C & D\end{pmatrix}\in SO(n,n|\Z)$. We will restrict our attention to this set of $\theta$ and denote this set by $\boldsymbol{\Theta}^0_n$. This is a dense set of the second category in the set of all skew-symmetric $n\times n$ matrices, as shown in~\cite{rieffel_morita_1999}. In~\cite{li_strong_2004} the Morita equivalence is extended to skew-symmetric matrices $\theta$ such that $g\theta$ is not defined for all $g$.

We will describe the Morita equivalences of $C^*$-algebras induced by the generators of the group, following~\cite{rieffel_morita_1999}. Since Morita equivalence of $C^*$-algebras is well known to be a transitive relation, this is enough to calculate the equivalences for the entire group.

Given an skew-symmetric $n\times n$ matrix $N$ with integer entries, the action of $\nu(N)$ on the matrix $\theta$ can be deduced from \eqref{eqn:asn action} as $\theta' = \theta + N$. We see immediately that the algebras $\mathcal{A}_\theta$ and $\mathcal{A}_{\theta'}$ are isomorphic, since $e(\vx\cd\theta\vy) = e(\vx \cd(\theta + N)\vy)$. 

For every element $R \in GL(n,\Z)$ we have an element $\rho(R)$ of $SO(n,n|\Z)$ given by \eqref{eqn:gln action} and we see 
that $\rho(R)( \theta) = R\theta R^t$. The commutation relations become $U_{\vx} U_{\vy} = e(\vx \cd R \theta R^t \vy)U_{\vy}U_{\vx}$ so the algebra $\mathcal{A}_{\rho(R)\theta}$ is isomorphic to $\mathcal{A}_\theta$. 

The last generator $\sigma_2$ of $SO(n,n|\Z)$ is not an isomorphism, so the equivalence bimodule is a bit more involved. We will calculate here explicitly the effect of $\sigma_{2}$ on the action of the algebra $\A_\theta$, following the description given in~\cite{rieffel_morita_1999} and~\cite{rieffel_projective_1988}.

Set $q = n-2$. The Morita equivalence goes via the bimodule $\mathcal{S}(\R \times \Z^q)$,  the space of Schwartz functions over $\R\times \Z^q$:
\[ \mathcal{S}(\R) = \left\{ f\in C^{\infty} \middle|\  \sup_{x\in \R} |x^n \dfrac{d^k}{dx^k} f(x)| < \infty\ \forall k,n \in \N\right\},\]
with $\mathcal{S}(\Z)$ the obvious restriction of the above to the integers. We have $\mathcal{S}(\R \times \R) \simeq \mathcal{S}(\R) \times \mathcal{S}(\R)$.
 Write
\[\theta = \begin{pmatrix} \theta_{11} & \theta_{12}\\ \theta_{21} & \theta_{22}\end{pmatrix},\]
with $\theta_{11}$ the top $2\times 2$ part of $\theta$. If $\theta_{11}\neq 0$, this is an invertible matrix. Then 
\begin{equation}\sigma_2(\theta) = \begin{pmatrix} \theta_{11}^{-1} & -\theta_{11}^{-1} \theta_{12}\\ \theta_{21} \theta_{11}^{-1} & \theta_{22} -\theta_{21} \theta_{11}^{-1} \theta_{12} \end{pmatrix},\label{eqn:transformed theta}\end{equation}
Define
\[
J_o = \begin{pmatrix} 0 & 1\\ - 1 & 0\end{pmatrix},\quad
J_{2} = \begin{pmatrix} J_o & 0 & 0\\ 0 & 0 & I_q\\ 0 & -I_q & 0\end{pmatrix}.
\]
Write $\vx_q$ for the vector in $\R^q$ consisting last $q$ components of $\vx$ and let $\Too$ be any $2\times 2$ matrix such that 
\[
\Too^t J_o \Too = -\theta_{11}\]
and $\Ttt$ any $q\times q$ matrix such that
\[
\Ttt^t - \Ttt = \theta_{22}.
\]
If $\theta_{11}$ is not zero, $J_o$ is similar to $-\theta_{11}$, and so there always exist such a $\Too$. Also, $\theta_{22}$ is skew-symmetric, so there exists a suitable $\Ttt$. 

We now follow~\cite{rieffel_morita_1999} in defining a right action of $\A_{\theta}$ and left action of $\A_{\theta'}$ on $\Sc(\R\times \Z^q)$ such that their actions commute.
\begin{lemma}\label{lem:right action}
If $\theta_{11}$ is invertible, i.e.\ not zero, a right action of the generators $U_{\vx}$ with $\vx \in\Z^n$ of $\A_\theta$ on $\mathcal{S}(\R\times\Z^q)$ is given by
\begin{equation}
 U_{\vx}^r f(t,p) = e\left( \left(\begin{pmatrix}\theta_{12}^t & \Ttt\end{pmatrix}\cd\vx\right) \cd p + {\l(\Too \cd\vx\r)}_1 \cd t\right) f(t+{(\Too\cd\vx)}_2, p + \vx_q)\label{eqn:right action},
\end{equation}
with ${(\Too\cd\vx)}_i$ the $i$-th component of $\Too\cd\vx$ and $\vx_q$ the vector composed of the last $q$ components of $\vx$. 
\end{lemma}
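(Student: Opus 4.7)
The plan is to verify two things: that each $U_{\vx}^r$ maps $\Sc(\R\times\Z^q)$ into itself, and that $\vx\mapsto U_{\vx}^r$ is compatible with the product in $\A_\theta$.

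The first claim is immediate, since $U_{\vx}^r$ is the composition of a translation of the $\R$-coordinate by ${(\Too\cd\vx)}_2$, a shift of the $\Z^q$-coordinate by $\vx_q$, and multiplication by a unimodular phase depending linearly on $(t,p)$; each of these operations preserves the Schwartz class.

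For the second claim, I would substitute \eqref{eqn:right action} into itself and compute $(U_{\vy}^r U_{\vx}^r f)(t,p)$ directly. The translations combine linearly in $\vx,\vy$, yielding the expected shift $\l(t+{(\Too\cd(\vx+\vy))}_2,\,p+{(\vx+\vy)}_q\r)$. Separating the total phase into its $(t,p)$-linear part (which exactly reconstitutes the phase of $U_{\vx+\vy}^r f$) from the $(t,p)$-independent remainder exhibits the composition as
\[
U_{\vy}^r U_{\vx}^r = e(c(\vx,\vy))\,U_{\vx+\vy}^r,
\]
with
\[
c(\vx,\vy) = \l(\begin{pmatrix}\theta_{12}^t & \Ttt\end{pmatrix}\cd\vx\r)\cd\vy_q + {(\Too\cd\vx)}_1\,{(\Too\cd\vy)}_2.
\]
A right action of $\A_\theta$ then reduces to the cocycle identity $c(\vx,\vy)-c(\vy,\vx) \equiv \vx\cd\theta\vy \pmod{\Z}$, which is forced by the defining relation $U_{\vx}U_{\vy}=e(\vx\cd\theta\vy)U_{\vy}U_{\vx}$.

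The final step is to verify this identity block by block: the antisymmetrization of ${(\Too\cd\vx)}_1{(\Too\cd\vy)}_2$ equals $(\Too\cd\vx)^t J_o (\Too\cd\vy)$, which by $\Too^t J_o \Too = -\theta_{11}$ produces the $\theta_{11}$-block; the mixed term $(\theta_{12}^t\cd\vx)\cd\vy_q$ delivers the off-diagonal $\theta_{12}$ and $\theta_{21}$ blocks using $\theta_{21}=-\theta_{12}^t$; and $(\Ttt\cd\vx_q)\cd\vy_q$ antisymmetrizes to $\vx_q^t(\Ttt^t-\Ttt)\vy_q = \vx_q^t\theta_{22}\vy_q$, the $\theta_{22}$-block. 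These four pieces sum to $\vx\cd\theta\vy$ up to terms in $\Z$, which are absorbed by $e(\cd)$.

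The main obstacle is the sign and index bookkeeping in this last step: tracking which components of $\vx,\vy$ pair with which block of $\theta$, and reconciling the signs coming from $J_o$, from the skew-symmetry of $\theta$, and from the defining identities for $\Too$ and $\Ttt$. No individual computation is deep, but the pieces must fit together exactly.
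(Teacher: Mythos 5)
Your strategy---check Schwartz-class preservation, compute the composition cocycle $c(\vx,\vy)$ of the explicit operators, and reduce ``right action'' to the identity $c(\vx,\vy)-c(\vy,\vx)\equiv\vx\cd\theta\vy \pmod{\Z}$---is a legitimate, more elementary substitute for the paper's argument, which instead embeds $\Z^n$ by a matrix $T$ into $M\times\hat M$ with $M=\R\times\Z^q$, invokes the Heisenberg representation, and reduces everything to the single identity $T^tJ_2T=-\theta$; your cocycle identity is exactly that identity written out by hand. Your formula for $c(\vx,\vy)$ and the reduction itself are correct.

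The gap is in the one step you explicitly postpone, which is the entire content of the lemma. Writing $\vx'=(\vx_1,\vx_2)$ for the first two components, the antisymmetrization of the $\Too$-term is
\[
(\Too\vx')^{t}J_o(\Too\vy')\;=\;\vx'^{\,t}\,\Too^{t}J_o\Too\,\vy'\;=\;-\,\vx'\cd\theta_{11}\vy',
\]
the \emph{negative} of the $\theta_{11}$-block, whereas your other three blocks come out with a plus sign. Hence for the operators exactly as displayed in \eqref{eqn:right action} one finds
\[
c(\vx,\vy)-c(\vy,\vx)\;=\;\vx\cd\theta\vy-2\,\vx'\cd\theta_{11}\vy',
\]
which is not congruent to $\vx\cd\theta\vy$ modulo $\Z$ for generic real $\theta_{11}$. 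So the block-by-block check you assert does not close; the sign bookkeeping you flagged as ``the main obstacle'' genuinely obstructs. The discrepancy disappears only after interchanging the roles of the two components of $\Too\vx$ (translate by $(\Too\vx')_1$ and put $(\Too\vx')_2$ in the phase), which is exactly the convention encoded in the matrix $T$ of the paper's proof; with that matching the first-block antisymmetrization becomes $(\Too\vx')_2(\Too\vy')_1-(\Too\vy')_2(\Too\vx')_1=-\vx'^{\,t}\Too^{t}J_o\Too\,\vy'=+\vx'\cd\theta_{11}\vy'$ and the identity holds. A complete proof along your lines must carry out the computation with this corrected matching (and note the mismatch with the displayed formula) or otherwise account for the sign; as written, the crucial verification is asserted rather than performed, and performed literally it fails.
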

\begin{proof}
Define the homomorphism $T$ of $\Z^n$ into $\R \times \Z^q \times \R^q$ by the matrix action
\[T = \begin{pmatrix}    
\Too & 0\\
0 & I_q\\
\theta_{12}^t & \Ttt\end{pmatrix}.
\]

We see that $T$ maps $\Z^n \simeq \Z^{2}\times \Z^q$ onto a lattice in $\R^{2} \times \Z^q \times \R^q$. Given an element $(s,t,u,v)$ of $\R \times \R \times \Z^q \times \R^q$, one has the following action on $\Sc(\R \times \Z^q)$: 
\[\pi(s,t,u,v) f(x,p) = \langle t,x\rangle \langle v,p\rangle f(x+t,p+u),\]
where $\langle , \rangle$ is the natural pairing between a group and its dual, with values in $\T$. 

We see that value of $v$ only plays a role modulo $\Z$, hence we can reduce $T$ to map to $\R \times \R \times \Z^q \times \T^q$. But this means we can write $T$ as a map from $\Z^n$ into  $M \times \hat{M}$, with $M = \R \times \Z^q$ and $\hat{M}$ is the Pontryagin dual of $M$. Because $T^t J_2 T = -\theta$, we see that the generators $U_{\vx}$ with $\vx \in \Z^n$, and hence the whole algebra, give a right action of $\A_{\theta}$ on $\Sc(\R \times \Z^q)$.
\end{proof}
The way to find the endomorphism algebra, hence the strongly Morita equivalent algebra $\A_{\theta'}$, is to find an embedding $S$ of $\Z^n$ into $\R \times \R \times \Z^q\times \T^q$ such that $S(\Z^n)$ and $T(\Z^n)$ are dual lattices, i.e.\ the $S(\vx) \cdot J T(\vy) \in \Z$ for all $\vx,\vy\in\Z^n$.

\begin{lemma}\label{lem:left action}
The strongly Morita equivalent algebra $\A_{\theta'}$ acts on $\Sc(\R\times\Z^q)$ by 
\begin{align}
U_{\vx}^l f(t,p)
&= e\left( {\l( J_o {\l(\Too^t\r)}^{-1} \begin{pmatrix}I_{2} & -\theta_{12}\end{pmatrix} \cd\vx \r)}_2 \cd t + \l(\begin{pmatrix} 0 & T_{32}\end{pmatrix} \cd\vx\r)\cd p \right)\notag\\
& f(t+ J_o \l({\l(\Too^t\r)}^{-1} \begin{pmatrix}I_{2} & -\theta_{12}\end{pmatrix}_1\r), p + \vx_q).\label{eqn:left action}
\end{align}
\end{lemma}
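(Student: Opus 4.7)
The strategy, following~\cite{rieffel_morita_1999} and~\cite{rieffel_projective_1988}, is to exhibit an explicit embedding $S: \Z^n \to M \times \hat M$, with $M = \R^2 \times \Z^q$, whose image is $J_2$-dual to the image $T(\Z^n)$ used for the right action in Lemma~\ref{lem:right action}. Once such an $S$ is in hand, the general Heisenberg module formalism produces a left action on $\Sc(M)$ via the same representation $\pi(s,t,u,v)f(x,p) = \langle t,x\rangle\langle v,p\rangle f(x+t, p+u)$ of $M \times \hat M$; it automatically commutes with the right action of $\A_\theta$, and its commutation constants are read off from the symplectic self-pairing $S^t J_2 S$, which we want to equal $-\sigma_2(\theta)$ as given by \eqref{eqn:transformed theta}.

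Reading off the displayed formula, the candidate $S$ is the block matrix whose column action on $\vx \in \Z^2 \times \Z^q$ has first block $J_o (\Too^t)^{-1}(I_2 \,|\, -\theta_{12})\vx$ (the shift in $t \in \R^2$), second block $\vx_q$ (the shift in $p \in \Z^q$), and the two character blocks $(J_o(\Too^t)^{-1}(I_2\,|\,-\theta_{12})\vx)_2$ and $(0\,|\,\Ttt)\vx$ pairing respectively with $t$ and $p$. Then the claimed formula for $U_\vx^l$ is literally $\pi(S(\vx))$. So the content of the lemma reduces to two block-matrix checks on $S$.

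The first check is that $S(\Z^n)$ and $T(\Z^n)$ really are dual lattices, i.e.\ $S^t J_2 T \in M_n(\Z)$; this guarantees that $U_\vx^l$ commutes with every $U_\vy^r$. Here the crucial simplification is $J_o(\Too^t)^{-1} \cdot \Too = J_o (\Too^t \Too^{-t}\cdot \text{Id})$, so the top block pairs with $T_{11}$ to give $J_o$ itself, and the remaining blocks pair with the $(\theta_{12}^t\,|\,\Ttt)$ column of $T$ to land in $\Z$ by construction. The second, and harder, check is $S^t J_2 S = -\sigma_2(\theta)$: each of the four blocks of $\sigma_2(\theta)$ in \eqref{eqn:transformed theta} must emerge separately from this single pairing. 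The top-left block $\theta_{11}^{-1}$ comes out of the identity $(\Too^t)^{-1} J_o^{-1} \theta_{11} J_o^{-1} \Too^{-1} = -\theta_{11}^{-1}$ obtained from $\Too^t J_o \Too = -\theta_{11}$ by inversion and using $J_o^2 = -I_2$; the off-diagonal blocks $\pm\theta_{11}^{-1}\theta_{12}$ arise from pairing the top block of $S$ with itself across the $-\theta_{12}$ summand; and the bottom-right block $\theta_{22} - \theta_{21}\theta_{11}^{-1}\theta_{12}$ combines the defining identity $\Ttt^t - \Ttt = \theta_{22}$ with the cross-term produced by $-\theta_{12}$.

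The main obstacle is therefore the block computation for $S^t J_2 S$: although each individual identity is elementary, bookkeeping through the four blocks simultaneously is where all the structure of~\eqref{eqn:transformed theta} has to appear at once. Everything else, including the fact that the prescription genuinely preserves $\Sc(M)$ and defines a bounded $*$-representation on its Hilbert space completion, is a formal consequence of the Heisenberg module construction once the two symplectic identities above are verified.
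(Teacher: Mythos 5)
Your proposal follows essentially the same route as the paper's proof: the paper simply writes down the dual embedding $S$ as an explicit block matrix, observes that $S^t J_2 T$ has integer entries so that the resulting left action commutes with the right action of $\A_\theta$, and then asserts that the self-pairing of $S$ identifies the commuting algebra as $\A_{\sigma_2(\theta)}$ --- exactly the two block-matrix checks you outline. A couple of bookkeeping slips in your sketch (the $(1,1)$ block of $S^t J_2 T$ comes out as $\l(-\Too^{-1}J_o\r)\l(J_o\Too\r)=I_2$ rather than $J_o$, and with the stated conventions, where $T^tJ_2T=-\theta$ encodes the right action, the self-pairing for the left action should reproduce $+\sigma_2(\theta)$ rather than $-\sigma_2(\theta)$) do not change the structure of the argument, which matches the paper's.
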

\begin{proof}
Consider the map $S: \Z^n \rightarrow \R^{2} \times \Z^q \times \R^q$ given by the matrix
\[S = \begin{pmatrix}
J_o {\l(\Too^t\r)}^{-1} & -J_o {\l(\Too^t\r)}^{-1} \theta_{12}\\
0 & I_q\\
0 & -\Ttt^t
\end{pmatrix}.
\] 
Since $S^t \circ J_2 T \in \Z$,  this gives a lattice dual to $T(\Z^n)$ in $\R^{2} \times \Z^q \times \R^q$, and thus gives a commuting left action.
As can be checked, this is the algebra $\A_{\theta'}$ with $\theta' = \sigma_{2}(\theta)$.
\end{proof}

We thus have a Morita equivalence between the two algebra $\A_\theta$ and $\A_{\theta'}$, via the bimodule $\Sc(\R^p \times \Z^q)$, with the right action given by \eqref{eqn:right action} and the commuting left action given by \eqref{eqn:left action}.

\section{Morita equivalence of torus equivariant spectral triples}\label{sec:morita equivalence symmetric}
We now consider Morita equivalences of equivariant real spectral triples on the noncommutative $n$-torus, as described in Section~\ref{sec:equivariant spectral triples}. See~\cite{venselaar_2010} for details of the classification. We first determine the structure of the bimodule $\Omega^1_D$, and then deduce the general form of a connection from this structure.

\begin{lemma}\label{lem:omega is free}
The bimodule $\Omega^1_D = \text{span}\{a[D,b] | a,b\in \A_\theta\}$ is a free module of rank $n$, i.e.\ given by  
\[ 
\Omega_D^1 \simeq \A_\theta^{\oplus n}.
\]
Thus we have $\E \otimes_{\A} \Omega_D^1 \simeq \E^{\oplus n}$.
\end{lemma}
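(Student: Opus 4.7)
The plan is to use the explicit form~\eqref{eqn:equivariant Dirac operator} of the equivariant Dirac operator to pin down $[D, U_{\vx}]$ on the algebra generators, and then exhibit $\{A_1, \ldots, A_n\}$ as a free $\A_\theta$-basis of $\Omega_D^1$. Because $B$ commutes with $\A_\theta$ and the Clifford matrices $A_i$ act only on the $\C^{2^{\lfloor n/2 \rfloor}}$-factor of $\Hi$ (while $\A_\theta$ acts diagonally), the equivariance relation~\eqref{eqn:equivariance algebra} combined with $\phi(\delta_j) U_{\vx} = \vx_j U_{\vx}$ yields
\[
[D, U_{\vx}] = \sum_{i=1}^{n} (\vtau_i \cd \vx)\, A_i U_{\vx}.
\]
Expanding $b = \sum_{\vk} b_{\vk} U_{\vk}$ and using that the $A_i$ commute with $\A_\theta$ gives the easy inclusion $a[D, b] = \sum_i A_i(a\,\partial_i b) \in \sum_i A_i \A_\theta$, where $\partial_i : \A_\theta \to \A_\theta$ is the derivation determined by $\partial_i(U_{\vk}) = (\vtau_i \cd \vk) U_{\vk}$.

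For the reverse inclusion the key fact is that the $n \times n$ matrix $M_{ki} := (\vtau_i)_k$ is invertible, since $\{\vtau_i\}$ is a basis of $\R^n$. Given arbitrary $c_1, \ldots, c_n \in \A_\theta$, let $\mathbf{e}_k$ denote the $k$-th standard basis vector of $\Z^n$, set $N := (M^T)^{-1}$, and take
\[
b_k := U_{\mathbf{e}_k}, \qquad a_k := \bigl(\, \textstyle\sum_j N_{kj} c_j \,\bigr) U_{-\mathbf{e}_k}.
\]
The identity $U_{-\mathbf{e}_k} U_{\mathbf{e}_k} = 1$, which holds because skew-symmetry forces $\mathbf{e}_k \cd \theta \mathbf{e}_k = 0$, together with $M^T N = I$, then gives $\sum_k a_k [D, b_k] = \sum_i A_i c_i$. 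This realizes every element of $\sum_i A_i \A_\theta$ as an explicit finite sum of elements $a[D, b]$ with $a, b \in \A_\theta$, so $\Omega_D^1 = \sum_i A_i \A_\theta$.

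To conclude that $\Omega_D^1$ is free of rank $n$, I would show the $\A_\theta$-linear map $(c_1, \ldots, c_n) \mapsto \sum_i A_i c_i$ from $\A_\theta^{\oplus n}$ to $\Omega_D^1$ is injective. Decomposing $\Hi = \Hi_0 \otimes \C^{2^{\lfloor n/2 \rfloor}}$ with $\A_\theta$ on the first factor and the $A_i$ on the Clifford factor, and expanding $c_i = \sum_{\vk} c_{i, \vk} U_{\vk}$, the linear independence of the $\{U_{\vk}\}$ as operators on $\Hi_0$ together with the linear independence of the Clifford generators $\{A_i\}$ (a standard consequence of the relations $A_i A_j + A_j A_i = 2 \delta_{ij}$, e.g.\ via traces) forces $c_{i, \vk} = 0$ for all $i, \vk$, hence $c_i = 0$. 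The final isomorphism $\E \otimes_{\A} \Omega_D^1 \simeq \E^{\oplus n}$ then follows formally from $\E \otimes_{\A_\theta} \A_\theta^{\oplus n} \simeq \E^{\oplus n}$. The main obstacle is the careful index bookkeeping when inverting $M$ in the reverse inclusion and making sure the whole construction stays inside the smooth algebra $\A_\theta$; the latter works cleanly only because the skew-symmetry of $\theta$ yields $U_{-\mathbf{e}_k} U_{\mathbf{e}_k} = 1$.
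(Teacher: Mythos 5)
Your proposal is correct and takes essentially the same route as the paper: both exhibit $\Omega_D^1=\sum_i A_i\A_\theta$ using that the $A_i$ commute with the algebra and that the commutators $[\delta_j,\cdot]$ land in $\A_\theta$, obtain the reverse inclusion from the unitary generators (the paper writes $a=aU_{e_i}^*[\delta_i,U_{e_i}]$ after a $GL(n,\R)$ change of basis sending the $\vtau_i$ to the standard basis, while you keep the $\vtau_i$ and invert the matrix $(\vtau_i)_k$ explicitly), and deduce freeness from the Clifford structure (the paper via a positivity argument, you via linear independence of the $U_{\vk}$ and of the $A_i$ on $\Hi$). These differences are cosmetic, so your argument is in substance the paper's proof.
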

\begin{proof}
We know due to~\cite{venselaar_2010}*{Theorem A} that $D$ can be written as $\sum_i (\vtau_i \cd\vdelta) \otimes A_i + B$ with ${\{A_i\}}_{i=1}^n$ a set of $2^{\lfloor n/2\rfloor} \times 2^{\lfloor n/2\rfloor}$ matrices generating an irreducible representation of the Clifford algebra $Cl_{n,0}$, and $B$ a bounded operator commuting with the algebra action.
To see that the $A_i$ generate the whole module, first consider the module $\Omega_{D'}^1$ with $D' = \sum_i e_i \cd\vdelta A_i$. The modules $\Omega_{D'}^1$ and $\Omega_{D}^1$ are isomorphic by the action of an element $G \in GL(n,\R)$ such that $\sum_i G(\vtau_i) \cd\vdelta A_i = \sum_i e_i  \cd\vdelta A_i$ for all $i$. Such a $G$ exists, since the $\vtau_i$ are $n$ independent vectors in $\R^n$.The bounded operator $B$ does not play a role in the definition of $\Omega_{D}^1$, since it commutes with the algebra.

Since for all $j$, $[\delta_j,a] \in \A$ and the $A_i$ commute with the algebra action, we can write any $b = \sum_i a_i [D,c_i] \in \Omega_D^1$ as a sum $\sum_j b_j A_j$, with each $b_j = \sum_i a_i [\delta_j, c_i]$. Also, for any element $a\in \A$ we have  $a = a U_{e_i}^*[\delta_i, U_{e_i}]$, hence any $b = \sum_i b_i A_i$ gives rise to an element $b \in \Omega_D^1$. 

The $A_i$ are independent over $\A$, because
\[ \sum_j {(\pi(a_j) A_j)}^{\dagger}  \sum_i \pi(a_i) A_i  = \sum_i \pi(a_i^* a_i),\]
where the last equality uses the fact that the $A_i$ are generators of a Clifford algebra. This is a sum of positive operators $a_i^* a_i$, hence only zero if all $a_i$ are zero, thus the module is free.
\end{proof}
Using this decomposition, we can describe a connection $\nabla_D$ in terms of its components acting on the different subspaces of $\E\otimes\Omega^1_D$. In fact a more convenient description, in terms of the generators of the Hopf algebra $U(\mathfrak{t}^n)$, is available.

\begin{lemma}
The connection $\nabla_D$ on $\E = \Sc(\R\times \Z^{n-2})$ is a linear combination of $n$ connections $\nabla_i + b_i$, for which
\[[\sum_j {(\vtau_i)}_j \nabla_j, U_{\vx}^r] \psi \otimes A_i v = \psi \otimes [\vtau_i \cd\vdelta A_i,U_{\vx}] v,\]
and $b_i \in \text{End}_{\A}(\E)$, the Morita equivalent algebra.
\end{lemma}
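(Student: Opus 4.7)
The plan is to use Lemma~\ref{lem:omega is free} to reduce the problem to $n$ decoupled scalar Leibniz equations, and then produce a particular solution of each using the equivariant $U(\mathfrak{t}^n)$-action on the bimodule.

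First, since by Lemma~\ref{lem:omega is free} the $A_i$ form a free basis of $\Omega_D^1$ over $\A_\theta$, every candidate connection is of the form $\nabla_D \psi = \sum_i \Delta_i(\psi) \otimes A_i$ for some $\C$-linear maps $\Delta_i \colon \E \to \E$. A short calculation, using $[\delta_j,U_\vx] = \vx_j U_\vx$ from \eqref{eqn:equivariance algebra}, the fact that the $A_i$ commute with the algebra, and that $B$ commutes with $\A_\theta$, gives
\[
[D,U_\vx] \;=\; \sum_i (\vtau_i\cd\vx)\, U_\vx A_i.
\]
Substituting this into the Leibniz rule \eqref{eqn:Leibniz rule} and equating coefficients of the $A_i$, which are $\A_\theta$-independent by Lemma~\ref{lem:omega is free}, yields the scalar Leibniz equation
\[
[\Delta_i, U_\vx^r]\, \psi \;=\; (\vtau_i\cd\vx)\, \psi U_\vx^r,
\]
separately for each $i$.

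Next, the plan is to exhibit a natural particular solution. The Hopf algebra $U(\mathfrak{t}^n)$ acts on the bimodule $\E = \Sc(\R\times\Z^{n-2})$ compatibly with the right $\A_\theta$-action of Lemma~\ref{lem:right action}; reading off the phases and translations in \eqref{eqn:right action}, each generator $\delta_j$ acts as an explicit first-order operator $\nabla_j$ built from $(2\pi i)^{-1}\partial_t$ and multiplication by components of $p$, with coefficients determined by $\Too$, $\Ttt$ and $\theta_{12}$. Compatibility with the right action then forces $[\nabla_j, U_\vx^r] = \vx_j U_\vx^r$, and therefore $\sum_j (\vtau_i)_j \nabla_j$ solves the $i$-th equation above.

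Finally, any other $\Delta_i$ satisfying the same Leibniz equation differs by $b_i := \Delta_i - \sum_j (\vtau_i)_j \nabla_j$, a $\C$-linear map that commutes with every $U_\vx^r$, hence with the whole right $\A_\theta$-action; thus $b_i \in \text{End}_{\A_\theta}(\E)$, which by the Morita equivalence of Section~\ref{sec:morita equivalences of tori} is exactly $\A_{\theta'}$. This proves the decomposition in the lemma. The main obstacle is the second step: one has to write down the correct first-order operator $\nabla_j$ on $\Sc(\R\times\Z^{n-2})$ starting from the intricate right action \eqref{eqn:right action}, check that $[\nabla_j,U_\vx^r] = \vx_j U_\vx^r$ holds identically in $\vx$, and verify that $\nabla_j$ preserves the Schwartz class. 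Everything else is a formal consequence of Lemma~\ref{lem:omega is free}.
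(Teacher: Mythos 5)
Your proposal is correct and follows essentially the same route as the paper: use the freeness of $\Omega^1_D$ from Lemma~\ref{lem:omega is free} to decouple the Leibniz rule into $n$ componentwise equations in the directions determined by the $\vtau_i$ (the paper phrases this as a $GL(n,\R)$ change of basis to the $\delta_i$), and observe that any two solutions differ by an operator commuting with the right $\A_\theta$-action, hence an element of $\text{End}_{\A_\theta}(\E)\simeq\A_{\theta'}$ by the Morita equivalence. The explicit first-order operators $\nabla_j$ with $[\nabla_j,U_\vx^r]=\vx_j U_\vx^r$, which you flag as the remaining computation, are exactly what the paper supplies in Lemma~\ref{lem:nabla_i}.
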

\begin{proof}
Via the isomorphism induced by the action of $GL(n,\R)$ introduced in the proof of Lemma~\ref{lem:omega is free}, we see that in order to deduce the connection $\nabla_D$, it is enough to calculate the connections $\nabla_i$ where $\nabla_i$ is defined through the Leibniz rule as:
\[\nabla_i ( \psi  a ) v  =  (\nabla_i \psi) a v + \psi (\delta_i a)v,\]
with $\psi \in \E$, $v\in \Hi$, $a\in\A$ using the right action of $\A$ on $\E$. 
This determines the $\nabla_i$ up to an additive term $b_i$ commuting with the action of the algebra $\A_\theta$ on $\E$. Because $\E$ is assumed to be a Morita equivalence bimodule, we have $b_i \in \A_{\theta'}$.
\end{proof}

\begin{lemma}\label{lem:nabla_i}
Up to elements of the algebra $\A_{\theta'}$, the action of $\nabla_i$ on the space $\Sc(\R\times \Z^q)$ is given by:
\begin{align*}
\text{for $i=1,2$: }\nabla_i &= \Too^{-1} \cd\begin{pmatrix} t & \frac{1}{2 \pi i}\dfrac{\partial}{\partial t} \end{pmatrix},\\
\text{for $i=3,\ldots n$: }\nabla_i &= p.
\end{align*}
\end{lemma}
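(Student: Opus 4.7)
The plan is to exploit the previous lemma, which characterizes $\nabla_i$ uniquely modulo $\A_{\theta'}$ by the Leibniz relation $\nabla_i(\psi a) = \nabla_i(\psi) a + \psi\, \delta_i(a)$. In operator form on the right action, this amounts to the commutation identity $[\nabla_i, U_\vx^r] = \vx_i\, U_\vx^r$ on $\Sc(\R \times \Z^q)$ for every generator $U_\vx$ of $\A_\theta$. So the task reduces to exhibiting explicit differential and multiplication operators on $\Sc(\R \times \Z^q)$ that reproduce these commutators; any operator whose commutator with every $U_\vx^r$ agrees with $\vx_i\, U_\vx^r$ differs from $\nabla_i$ by an element of $\A_{\theta'}$, which is precisely the freedom allowed by the statement.

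The first step is to compute three elementary commutators directly from the formula for $U_\vx^r$ in Lemma~\ref{lem:right action}. The operator $U_\vx^r$ shifts $t$ by $(\Too \cd \vx)_2$, shifts $p$ by $\vx_q$, and multiplies by a phase whose $t$-derivative is $2\pi i\, (\Too \cd \vx)_1$. Straightforward translation arguments then give $[p_j, U_\vx^r] = -(\vx_q)_j\, U_\vx^r$, $[t, U_\vx^r] = -(\Too \cd \vx)_2\, U_\vx^r$, and $[\tfrac{1}{2\pi i}\partial_t, U_\vx^r] = (\Too \cd \vx)_1\, U_\vx^r$.

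For $i = 3, \dots, n$, the identity for $p_{i-2}$ matches $\vx_i\, U_\vx^r$ up to an overall sign (recall $(\vx_q)_{i-2} = \vx_i$), which settles the formula in this range after absorbing that sign into the $\A_{\theta'}$-ambiguity. For $i = 1, 2$, I would assemble the two commutators of $t$ and $\tfrac{1}{2\pi i}\partial_t$ into a $2$-component vector identity whose right-hand side is $J_o^{-1} \Too$ applied to the first two entries of $\vx$. Invertibility of $\Too$ — guaranteed by $\Too^t J_o \Too = -\theta_{11}$ together with the hypothesis $\theta_{11} \neq 0$ — then lets me invert this linear system, expressing $(\nabla_1, \nabla_2)^t$ as $\Too^{-1}$ (possibly composed with $J_o$) acting on the column $(t, \tfrac{1}{2\pi i}\partial_t)^t$, which is the stated formula.

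The main obstacle is purely bookkeeping: tracking signs and the exact placement of $J_o$ in assembling the $i = 1, 2$ case, and confirming that any residual discrepancy lives in $\A_{\theta'}$ so that it can legitimately be absorbed into the allowed ambiguity. Beyond this, no analytic input is needed, since $t$, $\partial_t$, and the discrete $p_j$ all manifestly preserve $\Sc(\R \times \Z^q)$, and the commutators with the generators $U_\vx^r$ determine $\nabla_i$ on the whole algebra by linearity and the Leibniz rule.
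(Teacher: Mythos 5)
Your strategy is the same as the paper's: the previous lemma reduces everything to the commutation relations $[\nabla_i,U^r_{\vx}]=\vx_i\,U^r_{\vx}$, and the paper's proof is precisely the ``simple calculation using \eqref{eqn:right action}'' that you carry out. Your three elementary commutators are computed correctly: $[t,U^r_{\vx}]=-{(\Too\vx)}_2\,U^r_{\vx}$, $[\tfrac{1}{2\pi i}\partial_t,U^r_{\vx}]={(\Too\vx)}_1\,U^r_{\vx}$, and $[p_j,U^r_{\vx}]=-{(\vx_q)}_j\,U^r_{\vx}$.

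There is, however, a genuine flaw in how you close the argument. The freedom ``up to elements of the algebra $\A_{\theta'}$'' is \emph{additive}: $\nabla_i$ is determined up to adding an operator $b_i$ commuting with the right $\A_\theta$-action. A global sign on $p_{i-2}$ is not of that form, since $p_{i-2}$ and $-p_{i-2}$ differ by $2p_{i-2}$, which fails to commute with $U^r_{\vx}$ whenever $\vx_q\neq 0$; so the sign in the $i\geq 3$ case cannot be ``absorbed into the $\A_{\theta'}$-ambiguity.'' The same objection applies to the unresolved factor of $J_o$ in the $i=1,2$ case: writing $(\nabla_1,\nabla_2)^t=M\,(t,\tfrac{1}{2\pi i}\partial_t)^t$, your own commutators force $M J_o^{-1}\Too=I_2$, i.e.\ $M=\Too^{-1}J_o=\theta_{11}^{-1}\Too^t$, and the difference between this and the stated $\Too^{-1}$ is again not an additive element of $\A_{\theta'}$, so it cannot be dismissed as bookkeeping covered by the allowed ambiguity. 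As written, you have verified the stated formulas only up to a sign and a factor of $J_o$, which is strictly weaker than the statement; to finish, you must pin down the sign conventions in \eqref{eqn:right action} and in the Leibniz rule and exhibit the operators satisfying $[\nabla_i,U^r_{\vx}]=\vx_i U^r_{\vx}$ exactly (or state explicitly which convention reconciles the discrepancy). This is not a cosmetic point: Lemma~\ref{lem:transformed Dirac operator} feeds the precise commutators $[\nabla_i,U^l_{\vx}]$ into the computation of $D'$, so an uncontrolled sign or $J_o$ would propagate into the transformed Dirac operator.
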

\begin{proof}
 By a simple calculation using \eqref{eqn:right action}, these connections give the right commutators with generators of the algebra $\A_{\theta}$.
\end{proof}

\begin{remark}
While these connections lead to an equivariant spectral triple, they themselves do not form the Hopf algebra for a equivariant torus action, since  
\[ [\nabla_1, \nabla_2] = {(\theta_{11}^{-1})}_{12}.\]
We only have equivariance with respect to an equivariant action on the last $q = n-2$ components.
\end{remark}

Recall that all Morita equivalences for $\A_{\theta}$ are given by the group $SO(n,n|\Z)$ if $\theta\in \boldsymbol{\Theta}^0_n$, the set of all antisymmetric matrices such that the $SO(n,n|\Z)$ action is defined for all elements of $SO(n,n|\Z)$. This is a dense set of the second category, see Section~\ref{sec:morita equivalences of tori}. 
\begin{lemma}\label{lem:transformed Dirac operator}
Given an equivariant noncommutative $n$-torus spectral triple $(\A_{\theta},\Hi,D,J)$ with $D$ as in \eqref{eqn:equivariant Dirac operator}, $\theta \in \boldsymbol{\Theta}^0_n$, and $\theta' = \sigma_2(\theta)$, then the Dirac operator $D'$ in the equivariant Morita equivalent spectral triple $(\A_{\theta'},\Hi',D',J')$ is given by:
\begin{align}
 D' &= \sum_i \left( \vtau^i \cdot \begin{pmatrix} \theta_{11}^{-1} & -\theta_{11}^{-1} \theta_{12}\\ 0 & I_q\end{pmatrix} \vdelta\right) \otimes \gamma_i + B'\notag\\
&= \sum_i \left(\begin{pmatrix} -\theta_{11}^{-1} & 0\\ \theta_{12}^t \theta_{11}^{-1} & I_q\end{pmatrix} \vtau^i \cdot \vdelta\right) \otimes \gamma_i + B',\label{eqn:transformed Dirac operator}
\end{align}
with $B'$ an arbitrary bounded self-adjoint operator commuting with the algebra $\A_{\theta'}$, and the $\gamma_i$ matrices generating an irreducible representation of the Clifford algebra $Cl_{n,0}$ and $\vtau_i$ as in \eqref{eqn:equivariant Dirac operator}.
\end{lemma}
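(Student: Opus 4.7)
The plan is to exploit equation~\eqref{eqn:new Dirac commutator}, by which $D'$ is determined by the condition $[D', b] = [\nabla_D, b]$ for all $b \in \A_{\theta'}$, up to a bounded self-adjoint operator commuting with $\A_{\theta'}$. Setting $b = U_{\vx}^l$ for $\vx \in \Z^n$ and using the decomposition $\nabla_D = \sum_i (\vtau_i \cd (\nabla_1,\dots,\nabla_n)) A_i$ from Lemma~\ref{lem:nabla_i}, the proof reduces to computing each $[\nabla_i, U_{\vx}^l]$ on $\E = \Sc(\R\times\Z^{n-2})$ using the explicit left action~\eqref{eqn:left action}.

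For $i = 3,\dots,n$ we have $\nabla_i = p_{i-2}$, and since $U_{\vx}^l$ affects the $p$-coordinate only through the translation $p \mapsto p+\vx_q$, one gets $[\nabla_i, U_{\vx}^l] = -(\vx_q)_{i-2} U_{\vx}^l$. For $i=1,2$, the commutators $[t, U_{\vx}^l]$ and $[\frac{1}{2\pi i}\frac{\partial}{\partial t}, U_{\vx}^l]$ are read off directly from the phase-times-translation form of $U_{\vx}^l$: the first picks up minus the $t$-translation $[J_o(\Too^t)^{-1}(I_2,-\theta_{12})\vx]_1$, the second picks up the $t$-coefficient of the phase $[J_o (\Too^t)^{-1}(I_2,-\theta_{12})\vx]_2$. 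Combining these with the coefficients $(\Too^{-1})_{ij}$ appearing in $\nabla_1, \nabla_2$ produces $[\nabla_i, U_{\vx}^l] = (M\vx)_i U_{\vx}^l$ for some $n\times n$ matrix $M$.

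The main step, and the main obstacle, is identifying the top $2\times n$ block of $M$ with $(\theta_{11}^{-1}, -\theta_{11}^{-1}\theta_{12})$. This reduces to a short linear algebra manipulation using the defining identity $\Too^t J_o \Too = -\theta_{11}$, equivalently $\Too^{-1} J_o (\Too^t)^{-1} = \theta_{11}^{-1}$, to eliminate all explicit dependence on the auxiliary matrix $\Too$. The last $q$ rows $(0, I_q)$ of $M$ come for free from the previous paragraph, and any residual sign ambiguities in the identification can be absorbed by an orthogonal change of Clifford generators $\gamma_i$ (which still form an irreducible representation of $Cl_{n,0}$).

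Finally, the derivations $\vdelta$ act on $\A_{\theta'}$ via $[\delta_j, U_{\vx}^l] = \vx_j U_{\vx}^l$, so the identity $\vtau \cd M\vx = (M^t \vtau)\cd \vx$ gives
\[ [\nabla_D, U_{\vx}^l] = \sum_i (\vtau_i \cd M\vx) A_i U_{\vx}^l = \l[\sum_i (\vtau_i\cd M\vdelta)\gamma_i, U_{\vx}^l\r]. \]
Hence $D' = \sum_i (\vtau_i \cd M\vdelta) \otimes \gamma_i + B'$, with $B'$ an arbitrary bounded self-adjoint operator commuting with $\A_{\theta'}$ (the freedom built into equation~\eqref{eqn:new Dirac commutator}). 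The second expression in \eqref{eqn:transformed Dirac operator} follows by computing $M^t$ using the skew-symmetry of $\theta_{11}$.
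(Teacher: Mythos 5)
Your route is the same as the paper's: feed $b=U^l_{\vx}$ into \eqref{eqn:new Dirac commutator}, compute the commutators $[\nabla_i,U^l_{\vx}]$ from Lemma~\ref{lem:nabla_i} and the left action \eqref{eqn:left action}, eliminate the auxiliary matrix via $\Too^{-1}J_o{(\Too^t)}^{-1}=\theta_{11}^{-1}$, and read off $D'=\sum_i(\vtau_i\cd M\vdelta)\otimes\gamma_i+B'$. However, the step you yourself call the main one does not go through with the signs you state. You take the $t$-slot commutator to be \emph{minus} the translation, ${-\bigl(J_o{(\Too^t)}^{-1}(I_2\ {-\theta_{12}})\vx\bigr)}_1$, and the $\partial_t$-slot to be \emph{plus} the phase coefficient, ${+\bigl(J_o{(\Too^t)}^{-1}(I_2\ {-\theta_{12}})\vx\bigr)}_2$; combining with $\Too^{-1}$ the top block of $M$ is then $\Too^{-1}\,\text{diag}(-1,1)\,J_o\,{(\Too^t)}^{-1}(I_2\ {-\theta_{12}})$. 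Since $\text{diag}(-1,1)J_o$ is symmetric, $\Too^{-1}\text{diag}(-1,1)J_o{(\Too^t)}^{-1}$ is symmetric and $\Too$-dependent, so it can never equal the antisymmetric $\theta_{11}^{-1}$ and the advertised cancellation of $\Too$ fails. Nor can the fallback of ``an orthogonal change of the $\gamma_i$'' save this: flipping signs of rows of $M$ replaces $\vtau_i\cd M\vdelta$ by $(S\vtau_i)\cd M\vdelta$ with $S$ a diagonal sign matrix, which is implementable as an orthogonal change of Clifford generators only when ${(T^t)}^{-1}ST^t$ is orthogonal (e.g.\ a global sign, $T$ the matrix of the $\vtau_i$), and in any case cannot repair a wrong \emph{relative} sign inside the $2\times2$ block. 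What is needed is to fix the conventions (choice of pairing and of $\Too$) so that the commutator vector comes out proportional to $J_o{(\Too^t)}^{-1}(I_2\ {-\theta_{12}})\vx$, which is exactly the displayed identity $[\nabla_i,U^l_{\vx}]=\Too^{-1}\cd J_o{(\Too^t)}^{-1}(I_2\ {-\theta_{12}})\vx=\theta_{11}^{-1}(I_2\ {-\theta_{12}})\vx$ in the paper's proof.

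A second, independent omission: by the lemma preceding Lemma~\ref{lem:nabla_i}, the admissible connections are $\nabla_i+b_i$ with $b_i\in\text{End}_{\A}(\E)\simeq\A_{\theta'}$, so a general equivariant Morita equivalence contributes extra terms $\sum_i b_i\otimes A_i$ to $D'$. Because $\A_{\theta'}$ is noncommutative, $[b_i,U^l_{\vx}]\neq 0$ in general, so these terms do \emph{not} fall under the ``up to an operator commuting with $\A'$'' freedom of \eqref{eqn:new Dirac commutator}, which is where you park all residual ambiguity. The paper closes this by equivariance \eqref{eqn:equivariance dirac operator}: $[\delta_j,b_i]=0$ forces each $b_i$ to be a scalar, whence its contribution commutes with $\A_{\theta'}$ and is absorbed into $B'$. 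Without that argument you have produced one Dirac operator of the stated form, but not the lemma's claim that $B'$ is the only remaining freedom.
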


\begin{proof}
The Dirac operator $D'$ on $\Hi'$ can be %
calculated using \eqref{eqn:new Dirac commutator}:
\[ [D', U_{\vx}](s \otimes \xi \otimes \bar{t}) = ([\nabla_D, U_{\vx}] s)\xi\otimes \bar{t}.\]
We know $\nabla_D$ in terms of the $\nabla_i$ calculated in Lemma~\ref{lem:nabla_i}:
\begin{align*}
\left[\nabla_i, U_{\vx}^l\right]  &= \Too^{-1} \cd\l( J_o {\l(\Too^t\r)}^{-1} \begin{pmatrix}I_{2} & -T_{31}^t\end{pmatrix} \cd\vx\r)\\
&= \theta_{11}^{-1} \begin{pmatrix}I_{2} & -\theta_{12}\end{pmatrix} \cd\vx.
\end{align*}
From this data we can now compute $D'$ as
\begin{equation} D' = \sum_i (\vtau_i' \cd\vdelta) \otimes A_i + B',\label{eqn:Morita equivalent Dirac operator}\end{equation}
where $[\vtau_i'\cd\vdelta,U_{\vx}] \psi\otimes v= [\nabla_i',U_{\vx}^l] \psi \otimes v$, and $\vdelta$ are the generators of the equivariant torus action.

The additional term we had for the connections, the $b_i \in \A_{\theta'}$ is seen to vanish when we demand a Dirac operator of the form \eqref{eqn:Morita equivalent Dirac operator}. Since an equivariant Dirac operator should commute with the Hopf algebra as in \eqref{eqn:equivariance dirac operator}, we have that $[\delta_j, b_i]=0$ for all $i,j$. However, in this case we have that $[b_i,U_{\vx}^l] = 0$ for all $i$ and $\vx\in \Z^n$, and so the contribution of the $b_i$ to the Dirac operator is a bounded operator commuting with the algebra, hence can be absorbed into $B'$.
\end{proof}

\begin{theorem}
For $\theta \in \boldsymbol{\Theta}^0_n$, if the algebras $\At$ and $\A (\T_{\theta'}^n)$ are strongly Morita equivalent and there exists a Morita equivalence $(\E,\nabla_D)$ of equivariant spectral triples from $(\At,\Hi,D,J)$ to $(\A (\T_{\theta'}^n),\Hi',D')$, then there exists a Morita equivalence $(\bar{\E},\nabla_{D'},)$ from  $(\A (\T_{\theta'}^n),\Hi',D',J')$ to $(\At,\Hi,D,J)$ such that the composition of the Morita equivalences gives a spectral triple unitary equivalent to the original spectral triple $(\At,\Hi,D,J)$. Thus Morita equivalence of equivariant torus spectral triples is an equivalence relation when $\theta \in \boldsymbol{\Theta}^0_n$.
\end{theorem}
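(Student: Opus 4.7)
The strategy is to reduce the symmetry statement to a check on the generators of $SO(n,n|\Z)$ and then verify each case separately. By Lemma~\ref{lem:morita transitive}, the composition of Morita equivalences of spectral triples is again a Morita equivalence of spectral triples, and by Lemma~\ref{lem:generators of sonn} every element of $SO(n,n|\Z)$ is a product of the generators $\rho(R)$, $\nu(N)$, and $\sigma_2$. Hence it suffices to exhibit, for each generator, an explicit reverse Morita equivalence such that the round-trip on the spectral triple side is unitarily equivalent to the identity; transitivity then packages everything together.

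For $\rho(R)$ with $R\in GL(n,\Z)$ and for $\nu(N)$ with $N\in \SkewM$, the induced map on the $C^*$-algebra side is a $*$-isomorphism $\At \to \A(\T_{\theta'}^n)$, as recorded in Section~\ref{sec:morita equivalences of tori}. In this case the equivalence bimodule is simply the algebra with a twisted action, and the inverse Morita equivalence comes from $R^{-1}$ (respectively $-N$). The corresponding transformations of the Dirac operator \eqref{eqn:equivariant Dirac operator} amount to a linear change of the frame vectors $\vtau_i$ (by $R^t$) combined with a trivial shift of $B$; both are manifestly invertible, so the round-trip returns the original spectral triple on the nose.

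The substantive case is $\sigma_2$, and here the plan is to apply Lemma~\ref{lem:transformed Dirac operator} twice. Since $\sigma_2^2=\Id$ in $SO(n,n|\Z)$, we have $\sigma_2(\sigma_2(\theta))=\theta$, so we must check that the matrix
\[M(\theta)=\begin{pmatrix} -\theta_{11}^{-1} & 0 \\ \theta_{12}^t\theta_{11}^{-1} & I_q\end{pmatrix}\]
appearing in \eqref{eqn:transformed Dirac operator} satisfies $M(\sigma_2(\theta))M(\theta)=\Id$. Substituting $\sigma_2(\theta)_{11}=\theta_{11}^{-1}$ and $\sigma_2(\theta)_{12}=-\theta_{11}^{-1}\theta_{12}$, and using that $\theta_{11}$ is skew-symmetric (so $\theta_{11}^{-t}=-\theta_{11}^{-1}$), a direct multiplication confirms the identity. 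On the Hilbert-space side, Morita equivalence at the $C^*$-level gives the standard isomorphism $\bar{\E}\otimes_{\A_{\theta'}}\E \simeq \At$ as $\At$-bimodules, and from this follows a unitary
\[\bar{\E}\otimes_{\A_{\theta'}} \E \otimes_{\At} \Hi \otimes_{\At}\bar{\E}\otimes_{\A_{\theta'}}\E \;\simeq\; \Hi\]
intertwining the two $J$-structures via the definition \eqref{eqn:new Dirac operator real}. The bounded perturbations $B'$ and the right-action ambiguities $b_i\in\A_{\theta'}$ identified in Lemma~\ref{lem:nabla_i} are absorbed into the residual freedom in $B$, as in the proof of Lemma~\ref{lem:transformed Dirac operator}.

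The main obstacle is the last step: tracking the Hilbert-space identification precisely enough to conclude that the composite spectral triple is \emph{unitarily} equivalent to the original, not merely of the same abstract form. This amounts to verifying that under the canonical identification $\bar{\Sc(\R\times\Z^{q})}\otimes_{\A_{\theta'}}\Sc(\R\times\Z^{q})\simeq \At$, the explicit right and left actions from Lemmas~\ref{lem:right action} and~\ref{lem:left action}, together with the connections of Lemma~\ref{lem:nabla_i} applied twice, reproduce the vectors $\vtau_i$ and the operator $J$ exactly (up to the freedom in $B$ and the unit-norm phase in $J$ noted at the end of Section~\ref{sec:equivariant spectral triples}). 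Once this bookkeeping is done, symmetry for a single $\sigma_2$ follows, and the general case follows by transitivity from the decomposition in Lemma~\ref{lem:generators of sonn}.
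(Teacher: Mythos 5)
Your proposal follows essentially the same route as the paper: reduce to the generators of $SO(n,n|\Z)$ via Lemma~\ref{lem:generators of sonn} and transitivity (Lemma~\ref{lem:morita transitive}), dispose of $\rho(R)$ and $\nu(N)$ as algebra isomorphisms with manifestly invertible effect on $D$, and handle $\sigma_2$ by applying Lemma~\ref{lem:transformed Dirac operator} twice and checking $M(\sigma_2(\theta))M(\theta)=\Id$ using $\sigma_2(\theta)_{11}=\theta_{11}^{-1}$, $\sigma_2(\theta)_{12}=-\theta_{11}^{-1}\theta_{12}$ and skew-symmetry of $\theta_{11}$ --- exactly the computation in the paper. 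The Hilbert-space/real-structure bookkeeping you flag as the remaining obstacle is likewise left implicit in the paper (it rests on the classification of equivariant triples and the absorption of $B'$ and the $b_i$), so your argument is correct and matches the paper's proof.
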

\begin{proof}
We have for all $\theta \in \boldsymbol{\Theta}^0_n$ that the set of Morita equivalences is given by $SO(n,n|\Z)$, and by Lemma~\ref{lem:generators of sonn} we know that this group is generated by $GL(n,\Z)$, $\SkewM$ and the element $\sigma_2$. Since we know Morita equivalence of spectral triples is a transitive relation by Lemma~\ref{lem:morita transitive}, it is enough to construct inverses for the generators of this group. If $g\in GL(n,\Z)$ or $g\in \SkewM$, the Morita equivalence of algebras was actually an isomorphism. Since the algebras are isomorphic, the corresponding transformation of the Dirac operator is easily seen to be invertible. In fact, for the $\SkewM$ case, the Dirac operator is unchanged, and if  $R\in GL(n,\Z)$, the Dirac operator changes under $\rho(R)$ as:
\[ D' = (R^{-1} \vtau \cd\vdelta) A_i + B.\]

For the $\sigma_2$ generator, this follows from Lemma~\ref{lem:transformed Dirac operator}:
Since $\sigma_2 \circ \sigma_2 = \text{Id}$ on matrices, we see that the inverse of the Morita equivalence of algebras is given by applying $\sigma_2$ again. The corresponding Dirac operator is given by applying \eqref{eqn:transformed Dirac operator} twice, and using
\[\theta' = \sigma_{2}(\theta) = \begin{pmatrix} \theta_{11}^{-1} & - \theta_{11}^{-1} \theta_{12}\\ \theta_{21} \theta_{11}^{-1} & \theta_{22} - \theta_{21} \theta_{11}^{-1} \theta_{12}\end{pmatrix}.\]
We then see:
\begin{align*}
 \sigma_{2} (D')&= \sum_i \left( \begin{pmatrix} -\theta_{11} & 0\\  \theta_{12}^t\theta_{11}^{-1} \theta_{11} & I_q \end{pmatrix} \begin{pmatrix} -\theta_{11}^{-1} & 0\\ \theta_{12}^t \theta_{11}^{-1} & I_q\end{pmatrix} \vtau^i \cdot \vdelta\right) \otimes \gamma_i + B\\
&= \sum_i \left( \vtau^i \cd\vdelta\right) \otimes \gamma_i + B,
\end{align*}
hence the relation is symmetric. From Lemma~\ref{lem:morita transitive} we know that Morita equivalence of spectral triples is a transitive relation, and reflexivity is trivial via the identity map, thus Morita equivalence of equivariant spectral triples of the noncommutative $n$-torus is an equivalence relation.
\end{proof}
The fact that for equivariant spectral triples the Morita equivalences are provably invertible suggests that these type of spectral triples are in some way special among all spectral triples. In fact, even for noncommutative $n$-tori with $\theta \in \boldsymbol{\Theta}^0_n$, not even the dimension of the kernel of the Dirac operator in the Morita equivalent spectral triple is known if we drop the equivariance condition, see for example the discussion following~\cite{MR2366124}*{Corollary 5.2}. One would need to investigate if there is a canonical way to pick out these preferred spectral triples, even among spectral triples which do not admit an obvious equivariant action. Also the structure of the moduli space of noncommutative tori, especially the extra equivalences due to the $\sigma_2$-equivalence, merits investigation.

\bibliographystyle{abbrv}
\def\polhk#1{\setbox0=\hbox{#1}{\ooalign{\hidewidth
  \lower1.5ex\hbox{`}\hidewidth\crcr\unhbox0}}}
\begin{bibdiv}
\begin{biblist}

\bib{chamseddine_noncommutative_2010}{article}{
      author={Chamseddine, {Ali H.}},
      author={Connes, Alain},
       title={Noncommutative geometry as a framework for unification of all
  fundamental interactions including gravity. Part {I}.},
        date={2010},
     journal={Fortschritte der Physik},
      volume={58},
      number={6},
       pages={553\ndash 600},
         url={http://dx.doi.org/10.1002/prop.201000069},
}

\bib{connes_noncommutative_1995}{article}{
      author={Connes, Alain},
       title={Noncommutative geometry and reality},
        date={1995},
        ISSN={0022-2488},
     journal={J. Math. Phys.},
      volume={36},
      number={11},
       pages={6194\ndash 6231},
         url={http://dx.doi.org/10.1063/1.531241},
      review={\MR{MR1355905 (96g:58014)}},
}

\bib{connes_gravity_1996}{article}{
      author={Connes, Alain},
       title={Gravity coupled with matter and the foundation of non-commutative
  geometry},
        date={1996},
        ISSN={0010-3616},
     journal={Comm. Math. Phys.},
      volume={182},
      number={1},
       pages={155\ndash 176},
         url={http://projecteuclid.org/getRecord?id=euclid.cmp/1104288023},
      review={\MR{MR1441908 (98f:58024)}},
}

\bib{MR2371808}{book}{
      author={Connes, Alain},
      author={Marcolli, Matilde},
       title={Noncommutative geometry, quantum fields and motives},
      series={American Mathematical Society Colloquium Publications},
   publisher={American Mathematical Society},
     address={Providence, RI},
        date={2008},
      volume={55},
        ISBN={978-0-8218-4210-2},
      review={\MR{2371808 (2009b:58015)}},
}

\bib{dongen_electrodynamics_2011}{article}{
      author={Dungen, Koen van den},
      author={Suijlekom, Walter van},
       title={Electrodynamics from Noncommutative Geometry},
        date={2011-03},
     journal={{arXiv:1103.2928}},
         url={http://arxiv.org/abs/1103.2928},
}

\bib{elliott_morita_2007}{article}{
      author={Elliott, George~A.},
      author={Li, Hanfeng},
       title={Morita equivalence of smooth noncommutative tori},
        date={2007},
        ISSN={0001-5962},
     journal={Acta Math.},
      volume={199},
      number={1},
       pages={1\ndash 27},
         url={http://dx.doi.org/10.1007/s11511-007-0017-9},
      review={\MR{2350069 (2008k:58023)}},
}

\bib{MR2366124}{article}{
      author={Essouabri, Driss},
      author={Iochum, Bruno},
      author={Levy, Cyril},
      author={Sitarz, Andrzej},
       title={Spectral action on noncommutative torus},
        date={2008},
        ISSN={1661-6952},
     journal={J. Noncommut. Geom.},
      volume={2},
      number={1},
       pages={53\ndash 123},
         url={http://dx.doi.org/10.4171/JNCG/16},
      review={\MR{2366124 (2009i:58036)}},
}

\bib{krajewski_classification_1998}{article}{
      author={Krajewski, Thomas},
       title={Classification of finite spectral triples},
        date={1998},
        ISSN={0393-0440},
     journal={J. Geom. Phys.},
      volume={28},
      number={1-2},
       pages={1\ndash 30},
         url={http://dx.doi.org/10.1016/S0393-0440(97)00068-5},
      review={\MR{MR1653118 (2000c:58050)}},
}

\bib{li_strong_2004}{article}{
      author={Li, Hanfeng},
       title={Strong {M}orita equivalence of higher-dimensional noncommutative
  tori},
        date={2004},
        ISSN={0075-4102},
     journal={Journal f\"ur die Reine und Angewandte Mathematik},
      volume={576},
       pages={167–180},
         url={http://dx.doi.org/10.1515/crll.2004.087},
}

\bib{MR1607870}{article}{
      author={Mart{\'{\i}}n, Carmelo~P.},
      author={Gracia-Bond{\'{\i}}a, Jos{\'e}~M.},
      author={V{\'a}rilly, Joseph~C.},
       title={The standard model as a noncommutative geometry: the low-energy
  regime},
        date={1998},
        ISSN={0370-1573},
     journal={Phys. Rep.},
      volume={294},
      number={6},
       pages={363\ndash 406},
         url={http://dx.doi.org/10.1016/S0370-1573(97)00053-7},
      review={\MR{1607870 (98k:81315)}},
}

\bib{MR0353003}{article}{
      author={Rieffel, Marc~A.},
       title={Induced representations of {$C^{\ast} $}-algebras},
        date={1974},
        ISSN={0001-8708},
     journal={Advances in Math.},
      volume={13},
       pages={176\ndash 257},
      review={\MR{0353003 (50 \#5489)}},
}

\bib{rieffel_projective_1988}{article}{
      author={Rieffel, Marc~A.},
       title={Projective modules over higher-dimensional noncommutative tori},
        date={1988},
        ISSN={{0008-414X}},
     journal={Canadian Journal of Mathematics. Journal Canadien de
  Math\'ematiques},
      volume={40},
      number={2},
       pages={257–338},
}

\bib{rieffel_morita_1999}{article}{
      author={Rieffel, Marc~A.},
      author={Schwarz, Albert},
       title={Morita equivalence of multidimensional noncommutative tori},
        date={1999},
        ISSN={0129-167X},
     journal={Internat. J. Math.},
      volume={10},
      number={2},
       pages={289\ndash 299},
         url={http://dx.doi.org/10.1142/S0129167X99000100},
      review={\MR{1687145 (2000c:46135)}},
}

\bib{sitarz_equivariant_2003}{incollection}{
      author={Sitarz, Andrzej},
       title={Equivariant spectral triples},
        date={2003},
   booktitle={Noncommutative geometry and quantum groups ({W}arsaw, 2001)},
      series={Banach Center Publ.},
      volume={61},
   publisher={Polish Acad. Sci.},
     address={Warsaw},
       pages={231\ndash 263},
         url={http://dx.doi.org/10.4064/bc61-0-16},
      review={\MR{2024433 (2005g:58058)}},
}

\bib{vrilly_introduction_2006}{book}{
      author={V{\'a}rilly, Joseph~C.},
       title={An introduction to noncommutative geometry},
      series={EMS Series of Lectures in Mathematics},
   publisher={European Mathematical Society (EMS), Z\"urich},
        date={2006},
        ISBN={978-3-03719-024-1; 3-03719-024-8},
         url={http://dx.doi.org/10.4171/024},
      review={\MR{MR2239597 (2007e:58011)}},
}

\bib{venselaar_2010}{article}{
      author={Venselaar, Jan~Jitse},
       title={Classification of spin structures on the noncommutative n-torus},
        date={2010-03},
     journal={J. Noncommut. Geom. (to appear)},
      eprint={math.OA:1003.5156},
         url={http://arxiv.org/abs/1003.5156},
}

\bib{zhang_projective_2010}{article}{
      author={Zhang, Dapeng},
       title={Projective {D}irac operators, twisted {K-Theory} and local index
  formula},
        date={2010-08},
     journal={{arXiv:1008.0707}},
         url={http://arxiv.org/abs/1008.0707},
}

\end{biblist}
\end{bibdiv}

\end{document}